\newcommand{\Z}{\mathbb{Z}}
\newcommand{\R}{\mathbb{R}}
\newcommand{\C}{\mathbb{C}}
\newcommand{\Rskew}{R_e \rtimes_\sigma^\alpha G}
\newcommand{\Acal}{\mathcal{A}}
\DeclareMathOperator{\identity}{id}
\DeclareMathOperator{\Aut}{Aut}
\DeclareMathOperator{\Supp}{Supp}
\DeclareMathOperator{\Pic}{Pic}
\DeclareMathOperator{\im}{im}
\renewcommand{\emptyset}{\varnothing}
\theoremstyle{plain}
\newtheorem{theorem}{Theorem}[section]
\newtheorem{lemma}[theorem]{Lemma}
\newtheorem{prop}[theorem]{Proposition}
\newtheorem{corollary}[theorem]{Corollary}
\theoremstyle{definition}
\newtheorem{definition}[theorem]{Definition}
\newtheorem{exmp}[theorem]{Example}
\newtheorem{question}{Question}
\newtheorem{remark}[theorem]{Remark}
\begin{document}

\title{Bimodules in group graded rings}

\author{Johan \"{O}inert}
\address{Blekinge Institute of Technology,\\
Department of Mathematics and Natural Sciences,\\
SE-37179 Karlskrona, Sweden}
\email{johan.oinert@bth.se}

\subjclass[2010]{16S35, 16W50, 16D40}
\keywords{graded ring, strongly graded ring, crossed product, skew group ring, bimodule, Picard group.}
\date{2017-01-09}

\begin{abstract}
In this article we introduce the notion of a \emph{controlled} group graded ring.
Let $G$ be a group, with identity element $e$,
and let $R=\oplus_{g\in G} R_g$ be a unital $G$-graded ring.
We say that $R$
is
\emph{$G$-controlled}
if there is a one-to-one correspondence between
subsets of the group $G$
and (mutually non-isomorphic) $R_e$-bimodules in $R$,
given by $G \supseteq H \mapsto \oplus_{h\in H} R_h$.
For strongly $G$-graded rings, the property of being $G$-controlled is stronger than that of being simple.
We provide necessary and sufficient conditions for a general $G$-graded ring to be $G$-controlled.
We also give a
characterization of strongly $G$-graded rings
which are $G$-controlled.
As an application of our main results we give a description of all intermediate subrings $T$ with
$R_e \subseteq T \subseteq R$ of a $G$-controlled strongly $G$-graded ring $R$.
Our results generalize results for artinian skew group rings which were shown by Azumaya 70 years ago.
In the special case of skew group rings we obtain
an algebraic analogue of a recent result by Cameron and Smith on bimodules in crossed products of von Neumann algebras.
\end{abstract}

\maketitle

\pagestyle{headings}


\section{Introduction}

Recently, Cameron and Smith \cite{CameronSmith2015} studied bimodules
over a von Neumann algebra $M$ in the context of an inclusion
$M \subseteq M \rtimes_\alpha G$,
where $G$ is a group acting on $M$ by $*$-automorphisms and $M \rtimes_\alpha G$ is the corresponding crossed product von Neumann algebra.
They have shown \cite[Theorem 4.4(i)]{CameronSmith2015} that
if $G$ is a discrete group acting by outer $*$-automorphisms
on a simple\footnote{In fact their result is more general. They only assume that $M$ is a factor.} von Neumann algebra $M$, then there is a bijective correspondence between subsets of $G$ and $B$-closed (i.e. closed in the \emph{Bures-topology}) $M$-bimodules of $M \rtimes_\alpha G$.

It is natural to ask whether the same correspondence holds for a skew group ring,
which is the algebraic analogue of a crossed product von Neumann algebra.
To be more precise, if $G$ is a group which is acting
by outer automorphisms on a simple and unital ring $A$,
then we ask whether each $A$-bimodule, which is contained in the skew group ring $A \rtimes_\alpha G$,
is of the form $\oplus_{h\in H} A u_h$ for some subset $H \subseteq G$, where $\emptyset$ corresponds to the zero-module.
As it turns out, although it was not the main
focus of his investigation, in one of his proofs Azumaya has observed this correspondence
in the case when $G$ is finite \cite{Azumaya1946}.

The purpose of this article is to, in a systematic way, study the same type of correspondence in the more
general context of group graded rings.

Let $R$ be an associative and unital ring
and let $G$ be a multiplicatively written group with identity element $e\in G$. 
For subsets $X$ and $Y$ of $R$,
we let $XY$ denote the set of all finite sums of elements of the form $xy$, for $x\in X$ and $y\in Y$.
If there is a family $\{R_g\}_{g\in G}$ of additive subgroups
of $R$ such that
\begin{displaymath}
	R = \oplus_{g\in G} R_g \quad \text{and} \quad  R_g R_h \subseteq R_{gh}
\end{displaymath}
for all $g,h \in G$, then the ring $R$ is said to be \emph{$G$-graded} (or \emph{graded by $G$}).
A $G$-graded ring $R$ for which $R_g R_h = R_{gh}$ holds, for all $g,h\in G$, is said to be \emph{strongly $G$-graded}.

If $R$ is a $G$-graded ring, then one immediately observes that
$R_e$ is a subring of $R$ and that $1_R \in R_e$ (see e.g. \cite[Proposition 1.4]{Dade1980}).
For any $g\in G$, $R_g$ is an $R_e$-bimodule.
If $R$ is strongly $G$-graded, then for each $g\in G$, $R_g$ is finitely generated and projective as a left (right) $R_e$-module
(see \cite[Proposition 1.10]{Karpilovsky1987}).

Each subset of $G$ gives rise to an $R_e$-bimodule in $R$.
Indeed,
if $H$ is a subset of $G$, then $R_H = \oplus_{h\in H} R_h$ is an $R_e$-bimodule which is contained in $R$.
We let the empty set give rise to the zero-module, i.e. $R_\emptyset = \{0\}$.
It is natural to ask the following question:
\begin{quote}
\emph{When does every $R_e$-bimodule of $R$ arise in this way?}
\end{quote}

Let $\mathtt{Mod}_R(R_e)$ denote the set of 
$R_e$-bimodules which are contained in $R$, with scalar multiplication
coming from
the ring multiplication in $R$.
We make the following definition.

\begin{definition}[$G$-controlled ring]\label{DefBM}
A ring $R$ is said to be \emph{$G$-controlled}
if it is equipped with a $G$-gradation
such that the following two assertions hold:
\begin{enumerate}
	\item The map
\begin{displaymath}
	\varphi : \mathcal{P}(G) \to \mathtt{Mod}_R(R_e),
	\quad H \longmapsto \, \oplus_{h\in H} R_h
\end{displaymath}
is a bijection.
\item For $S,T\in \mathcal{P}(G)$, $\varphi(S) \cong \varphi(T)$ (as $R_e$-bimodules) if and only if $S=T$.
\end{enumerate}
\end{definition}

This article is organized as follows.

In Section \ref{Sec:PrelNot} we recall important notions which will be used in subsequent sections.
In Section \ref{Sec:CharGeneral} we give a complete characterization of $G$-controlled rings (see Theorem~\ref{MainResult}).
We also provide an example of a $G$-controlled ring which is not strongly $G$-graded (see Example~\ref{GcontrolledNotStrong}).
In Section \ref{Sec:CharStrongly} we first point out that $G$-controlled rings are often strongly $G$-graded (see Proposition~\ref{GradedSimpleStronglyEquivalence} and Remark~\ref{Rem:GcontrolledStrongly}).
We then 
give a characterization of strongly $G$-graded rings which are $G$-controlled (see Theorem~\ref{MainResult2}). We also specialize this result to $G$-crossed products (incl. skew group rings) to see how $G$-controlness is connected to outerness (see Corollary~\ref{MainResult:CrossedProducts} and Remark~\ref{Rem:OuternessAzumaya}). This shows how our result generalizes those of Azumaya \cite{Azumaya1946}.
In Section \ref{Sec:Subrings} we give a description of all intermediate subrings $T$ where $R_e \subseteq T \subseteq R$
of a strongly $G$-graded and $G$-controlled ring $R$ (see Proposition~\ref{Prop:Subring}).
In Section \ref{Sec:Disc} we present some simplicity results on strongly $G$-graded rings and explain how they are related to our investigation of $G$-controlled rings. We also present some open questions (see Questions~\ref{Q1}, \ref{Q2} and \ref{Q3}).

\section{Preliminaries and notation}\label{Sec:PrelNot}

The centralizer of
a non-empty subset $S$ of a ring $T$ will be denoted by $C_T(S)$ and is defined as the set
of all elements of $T$ that commute with each element of $S$. The centre
of $T$ is defined as $C_T(T)$ and will be denoted by $Z(T)$.
The group of multiplication invertible elements of $T$ will be denoted by $U(T)$.

Let $R=\oplus_{g\in G} R_g$ be a $G$-graded ring.
Each element $x \in R$ may be written as $x=\sum_{g\in G} x_g$ where $x_g \in R_g$ is unique for each $g\in G$,
and zero for all but finitely many $g\in G$.
For $g\in G$ we define a map
\begin{displaymath}
	E_g : R \to R_g, \quad x=\sum_{h\in G}x_h \mapsto x_g.
\end{displaymath}
Notice that $E_g$ is an $R_e$-bimodule homomorphism.
The \emph{support} of $r\in R$ is defined as $\Supp(r)=\{g\in G \mid E_g(r)\neq 0 \}$.
An ideal $I$ of a $G$-graded ring $R$ is said to be \emph{graded} if $I=\oplus_{g\in G} (I \cap R_g)$ holds.
The ring $R$ is said to be \emph{graded simple} if $R$ and $\{0\}$ are the only two graded ideals of $R$.

Recall that
$R=\oplus_{g\in G} R_g$ is said to be a \emph{$G$-crossed product}
if $E_g(R) \cap U(R) \neq \emptyset$, for each $g\in G$.
In that case, we may choose an invertible $u_g \in R_g$, for each $g\in G$.
Pick $u_e=1$.
It is clear that $R_g = R_e u_g = u_g R_e$ and that the set $\{u_g \mid g\in G\}$ is a basis for $R$ as a left
(and right) $R_e$-module. We now define two maps:
\begin{displaymath}
	\sigma : G \to \Aut(R_e) \text{ by } \sigma_g(a)=u_g a u_{g^{-1}} \text{ for } g\in G, a\in R_e,
\end{displaymath}
and
\begin{displaymath}
	\alpha : G \times G \to U(R_e) \text{ by } \alpha(g,h)=u_g u_h u_{gh}^{-1} \text{ for } g,h \in G.
\end{displaymath}
One may now show that the following holds for any $g,h,t\in G$ and $a\in R_e$ (see e.g. \cite[Proposition 1.4.2]{NVO2004}):
\begin{enumerate}
	\item $\sigma_g(\sigma_h(a))=\alpha(g,h) \sigma_{gh}(a) \alpha(g,h)^{-1}$
	\item $\alpha(g,h)\alpha(gh,t)=\sigma_g(\alpha(h,t)) \alpha(g,ht)$
	\item $\alpha(g,e)=\alpha(e,g)=1_R$.
\end{enumerate}
Any two homogeneous elements $a\in R_g$ and $b\in R_h$
may be expressed as $a=a_1 u_g$ and $b=b_1 u_h$, with $a_1,b_1\in R_e$ and their product is
\begin{displaymath}
ab = (a_1 u_g)(b_1 u_h) = a_1 ( u_g b_1 u_g^{-1}) u_g u_h
= a_1 ( u_g b_1 u_g^{-1}) (u_g u_h u_{gh}^{-1}) u_{gh}
= a_1 \sigma_g(b_1) \alpha(g,h) u_{gh}.
\end{displaymath}

Important examples of $G$-crossed products are given by e.g. skew group rings, twisted group rings
and group rings.
It is not difficult to see that $G$-crossed products are necessarily strongly $G$-graded.
However, as
e.g. Example \ref{Example:Matrix} demonstrates, not all strongly $G$-graded rings are $G$-crossed products.

\section{A characterization of $G$-controlled rings}\label{Sec:CharGeneral}

In this section we give a characterization of $G$-controlled rings (see Theorem~\ref{MainResult}).
We begin by finding necessary conditions for a $G$-graded ring to be $G$-controlled.

\begin{prop}\label{Prop:GeneralNecessaryConditions}
Let $G$ be a group and let $R$ be a $G$-graded ring.
If $R$ is $G$-controlled, then the following five assertions hold:
\begin{enumerate}[{\rm (i)}]
	\item\label{PropGenNec:RgCongRh} $R_g \cong R_h$ (as $R_e$-bimodules) if and only if $g=h$;
	\item\label{PropGenNec:SimpleModule} $R_g$ is a (non-zero) simple $R_e$-bimodule for each $g\in G$;
	\item\label{PropGenNec:SimpleRe} $R_e$ is a simple ring;
	\item\label{PropGenNec:Centralizer} $C_R(R_e)=Z(R_e)$;
	\item\label{PropGenNec:GradedIdeal} Every ideal of $R$ is graded.
\end{enumerate}
\end{prop}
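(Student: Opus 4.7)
The plan is to derive all five assertions essentially from the two defining properties of $\varphi$, using singletons and subsets $\{e,g\}$ at strategic moments.

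First I would dispatch (\ref{PropGenNec:RgCongRh}) as an immediate specialization: apply condition~(2) of Definition~\ref{DefBM} to the singletons $\{g\}$ and $\{h\}$ in $\mathcal{P}(G)$, noting that $\varphi(\{g\}) = R_g$. Next, for (\ref{PropGenNec:SimpleModule}), I would first observe that $R_g \neq 0$ since otherwise $\varphi(\{g\}) = 0 = \varphi(\emptyset)$ would violate injectivity of $\varphi$. Then, given a non-zero $R_e$-subbimodule $M \subseteq R_g$, surjectivity of $\varphi$ forces $M = \oplus_{h \in H} R_h$ for some $H \subseteq G$; containment in $R_g$ and non-triviality pin down $H = \{g\}$, so $M = R_g$. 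Assertion (\ref{PropGenNec:SimpleRe}) is then just (\ref{PropGenNec:SimpleModule}) at $g = e$, since two-sided ideals of $R_e$ are precisely $R_e$-subbimodules of the $R_e$-bimodule $R_e$.

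Assertion (\ref{PropGenNec:GradedIdeal}) is also quick: any two-sided ideal $I$ of $R$ is in particular an $R_e$-subbimodule of $R$, so by surjectivity of $\varphi$ it has the form $\oplus_{h \in H} R_h$, which is graded by definition.

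The substantive step is (\ref{PropGenNec:Centralizer}); the inclusion $Z(R_e) \subseteq C_R(R_e)$ is trivial. For the converse, take $x \in C_R(R_e)$ and decompose $x = \sum_{g} x_g$. Since for $a \in R_e$ both $ax_g$ and $x_g a$ lie in $R_g$, uniqueness of the homogeneous decomposition shows that each component $x_g$ individually commutes with $R_e$. The key move is then to consider, for each $g \in G$, the map
\begin{equation*}
    \psi_g : R_e \to R_g, \qquad a \mapsto a x_g,
\end{equation*}
which is an $R_e$-bimodule homomorphism precisely because $x_g$ centralizes $R_e$. If $x_g \neq 0$, then $\im \psi_g$ is a non-zero $R_e$-subbimodule of $R_g$, hence equals $R_g$ by (\ref{PropGenNec:SimpleModule}); meanwhile $\ker \psi_g$ is a two-sided ideal of $R_e$ (using again that $x_g$ centralizes $R_e$), hence is $0$ by (\ref{PropGenNec:SimpleRe}). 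Thus $\psi_g$ becomes an $R_e$-bimodule isomorphism $R_e \cong R_g$, and (\ref{PropGenNec:RgCongRh}) forces $g = e$. Therefore all components with $g \neq e$ vanish, $x = x_e \in R_e$, and since $x$ commutes with $R_e$ it lies in $Z(R_e)$.

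The main obstacle I foresee is just the verification in (\ref{PropGenNec:Centralizer}) that $\ker \psi_g$ really is a two-sided ideal; this requires the small but crucial observation that $(ab)x_g = a(bx_g) = a(x_g b) = (ax_g)b$, which uses the centralizing hypothesis on $x_g$ in the middle step. Everything else is bookkeeping with the two axioms of $G$-controlness.
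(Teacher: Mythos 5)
Your proposal is correct and follows essentially the same route as the paper: singletons for (i)--(iii), surjectivity of $\varphi$ for (v), and for (iv) the multiplication-by-$x_g$ bimodule map $R_e \to R_g$ combined with (i) and (ii) to force $g=e$. The only (welcome) extra detail is your explicit check that each homogeneous component of a centralizing element again centralizes $R_e$, which the paper leaves implicit.
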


\begin{proof}
Let $R$ be a $G$-controlled ring, and let $\varphi$ be defined as in Definition \ref{DefBM}.

\noindent \eqref{PropGenNec:RgCongRh}:
Take $g,h\in G$. Notice that $\varphi(\{g\}) \cong \varphi(\{h\})$ if and only if $g=h$, i.e.
$R_g \cong R_h$ if and only if $g=h$.

\noindent \eqref{PropGenNec:SimpleModule}: Take $g\in G$.
By the injectivity of $\varphi$ we get that $R_g \neq \{0\}$,
and by the surjectivity of $\varphi$,
$R_g$ can not contain any proper non-zero $R_e$-submodule.
Thus, $R_g$ is a simple $R_e$-bimodule.

\noindent \eqref{PropGenNec:SimpleRe}: This follows immediately from \eqref{PropGenNec:SimpleModule}.

\noindent \eqref{PropGenNec:Centralizer}: Take $g\in G$ and let $x_g \in C_R(R_e) \cap R_g$ be non-zero.
Define $f : R_e \to R_g, r \mapsto r x_g$. Clearly, $f$ is an $R_e$-bimodule homomorphism.
Using \eqref{PropGenNec:SimpleModule} we conclude that $\ker f = \{0\}$ and that $\im f = R_g$, i.e.
$f$ is an isomorphism. From \eqref{PropGenNec:RgCongRh} we get $g=e$. Hence, $C_R(R_e)=Z(R_e)$.

\noindent \eqref{PropGenNec:GradedIdeal}: Every ideal $I$ of $R$ is an $R_e$-bimodule. Hence $I=\oplus_{h\in H} R_h$ for some
subset $H\subseteq G$. In particular, $I$ is graded.
\end{proof}

We now begin our search for sufficient conditions for $G$-controlness
by showing the following essential lemma.

\begin{lemma}\label{MainLemma}
Let $S$ be a unital ring and let $M$ and $N$ be (non-zero) simple $S$-bimodules
which are non-isomorphic.
For any $x\in M \setminus \{0\}$ and $y \in N \setminus \{0\}$ there is some $n\in \Z_+$
and $s_1^{(1)},\ldots,s_n^{(1)},s_1^{(2)},\ldots,s_n^{(2)} \in S$ such that
$\sum_{i=1}^n s_i^{(1)} x s_i^{(2)} \neq 0$ and $\sum_{i=1}^n s_i^{(1)} y s_i^{(2)} = 0$.
\end{lemma}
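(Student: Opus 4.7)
The plan is to work inside the external direct sum $M \oplus N$, regarded as an $S$-bimodule in the obvious componentwise way, and to study the sub-bimodule $B$ generated by the single element $(x,y)$. Since $S$ is unital, $1 \cdot (x,y) \cdot 1 = (x,y)$, so every element of $B$ has the form
\[
\sum_{i=1}^n s_i^{(1)}(x,y)s_i^{(2)} \;=\; \Bigl(\sum_{i=1}^n s_i^{(1)}xs_i^{(2)},\; \sum_{i=1}^n s_i^{(1)}ys_i^{(2)}\Bigr).
\]
Consequently the conclusion of the lemma is equivalent to the single assertion that $B$ contains some element of the form $(m,0)$ with $m \neq 0$; that is, $K_1 := B \cap (M \oplus 0) \neq 0$.

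Before running the main argument I would record two basic facts. First, the projection $\pi_M \colon B \to M$ is an $S$-bimodule homomorphism whose image is a non-zero sub-bimodule of $M$ (it contains $x \neq 0$), so by simplicity of $M$ we have $\pi_M(B) = M$; likewise $\pi_N(B) = N$. Second, $K_1$ is itself an $S$-sub-bimodule of $M \oplus 0$, which under the obvious identification is a sub-bimodule of the simple bimodule $M$, so $K_1 \in \{0,\, M \oplus 0\}$; analogously $K_2 := B \cap (0 \oplus N) \in \{0,\, 0 \oplus N\}$.

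The main step is then a short proof by contradiction. Assume $K_1 = 0$. If additionally $K_2 = 0$, then both $\pi_M|_B$ and $\pi_N|_B$ are injective; combined with the surjectivity already established, each is an $S$-bimodule isomorphism, yielding $M \cong B \cong N$, contrary to $M \not\cong N$. If instead $K_2 = 0 \oplus N$, then for arbitrary $m \in M$ pick some $(m,n') \in B$ (surjectivity of $\pi_M|_B$) and subtract $(0,n') \in K_2 \subseteq B$ to conclude $(m,0) \in B$; thus $M \oplus 0 \subseteq B$, forcing $K_1 = M \oplus 0 \neq 0$ (since $M \neq 0$), a contradiction. Therefore $K_1 = M \oplus 0$, and any non-zero $(m,0) \in K_1$ supplies the desired scalars.

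The only point that requires a little care is the dichotomy for $K_1$ and $K_2$; everything rests on the decisive use of the non-isomorphism hypothesis $M \not\cong N$, which rules out precisely the ``graph-of-isomorphism'' configuration in which $B$ would project isomorphically onto each summand. Beyond that, the argument is bookkeeping using unitality of $S$ and the simplicity of $M$ and $N$ as $S$-bimodules.
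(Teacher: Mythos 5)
Your proof is correct and is essentially the paper's argument in disguise: the paper negates the conclusion to obtain a well-defined $S$-bimodule map $f\colon N \to M$, $\sum_{i} s_i^{(1)} y s_i^{(2)} \mapsto \sum_{i} s_i^{(1)} x s_i^{(2)}$, which simplicity and unitality force to be an isomorphism, contradicting $M \not\cong N$; your sub-bimodule $B \subseteq M \oplus N$ is precisely the graph of that map, with $K_1 = 0$ playing the role of well-definedness and $K_2$ the role of the kernel. The graph formulation packages the well-definedness check a bit more cleanly, but the decisive steps (simplicity forcing the dichotomies, and the final clash with $M \not\cong N$) are the same.
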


\begin{proof}
Take $x\in M \setminus \{0\}$ and $y \in N \setminus \{0\}$. We notice that $SxS=M$ and $SyS=N$.
Seeking a contradiction, suppose that $\sum_{i=1}^n s_i^{(1)} x s_i^{(2)} =0$ whenever $\sum_{i=1}^n s_i^{(1)} y s_i^{(2)} = 0$.
We define a map $f : N \to M, \quad \sum_{i=1}^n s_i^{(1)} y s_i^{(2)} \mapsto \sum_{i=1}^n s_i^{(1)} x s_i^{(2)}$.
By our assumption $f$ is a well-defined homomorphism of $S$-bimodules.
Moreover, by the unitality of $S$ and the simplicity of $M$ and $N$ we conclude that $f$ is an isomorphism. This is a contradiction.
\end{proof}

\begin{prop}\label{Prop:Surjectivity}
Let $G$ be a group and let $R=\oplus_{g\in G} R_g$ be a $G$-graded ring.
Suppose that $R_g$ is a (non-zero) simple $R_e$-bimodule, for each $g\in G$,
and that $R_g \cong R_h$ if and only if $g=h$, for $g,h\in G$.
If $P$ is an $R_e$-bimodule which is contained in $R$ and $x\in P \setminus \{0\}$,
then $R_g$ is an $R_e$-submodule of $P$, for every $g\in \Supp(x)$.
In particular, $P=\oplus_{s\in S} R_s$ for some subset $S\subseteq G$.
\end{prop}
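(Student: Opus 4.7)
The plan is to fix $x \in P \setminus \{0\}$ and $g_0 \in \Supp(x)$, and then iteratively use Lemma~\ref{MainLemma} to produce a non-zero element $y \in P \cap R_{g_0}$. Once this element is constructed, the simplicity hypothesis on $R_{g_0}$ as an $R_e$-bimodule immediately upgrades $P \cap R_{g_0}$ to all of $R_{g_0}$, proving the first claim.

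More precisely, write $\Supp(x) = \{g_0, g_1, \dots, g_k\}$ and let $x_{g_i} \in R_{g_i}\setminus\{0\}$ denote the homogeneous components. Since $g_1 \neq g_0$, the hypothesis gives that $R_{g_0}$ and $R_{g_1}$ are non-isomorphic simple $R_e$-bimodules, so Lemma~\ref{MainLemma} (applied with $S = R_e$, $M = R_{g_0}$, $N = R_{g_1}$) yields elements $a_j^{(1)}, a_j^{(2)} \in R_e$ with $\sum_j a_j^{(1)} x_{g_0} a_j^{(2)} \neq 0$ while $\sum_j a_j^{(1)} x_{g_1} a_j^{(2)} = 0$. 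Setting $y_1 := \sum_j a_j^{(1)} x a_j^{(2)}$, the fact that $P$ is an $R_e$-bimodule gives $y_1 \in P$, and since each $E_{g}$ is $R_e$-bilinear and $R_e \cdot R_g \cdot R_e \subseteq R_g$, we have $\Supp(y_1) \subseteq \Supp(x) \setminus \{g_1\}$ and $g_0 \in \Supp(y_1)$. Repeating this argument with $y_1$ in place of $x$ to eliminate $g_2$, then $g_3$, and so on, after at most $k$ steps I obtain a non-zero $y \in P$ with $\Supp(y) = \{g_0\}$, i.e.\ $y \in P \cap R_{g_0} \setminus \{0\}$. Since $P \cap R_{g_0}$ is an $R_e$-sub-bimodule of the simple $R_e$-bimodule $R_{g_0}$, it must equal $R_{g_0}$, so $R_{g_0} \subseteq P$.

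For the ``in particular'' statement, set $S := \{g \in G \mid R_g \subseteq P\}$. The inclusion $\oplus_{s \in S} R_s \subseteq P$ is immediate. Conversely, any $x \in P$ lies in $\oplus_{g \in \Supp(x)} R_g$, and by the first part $\Supp(x) \subseteq S$, giving the reverse inclusion.

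The main technical obstacle is ensuring that the iteration terminates correctly: each step must strictly shrink the support while preserving the $g_0$-component as non-zero. This is handled cleanly because Lemma~\ref{MainLemma} simultaneously guarantees the vanishing of the chosen ``bad'' component and the survival of the $g_0$-component, and because $E_{g_0}$ being an $R_e$-bimodule homomorphism allows one to track the $g_0$-component through the process. Apart from this bookkeeping, everything reduces to the simplicity of the graded components together with the non-isomorphism assumption.
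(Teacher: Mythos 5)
Your proof is correct and follows essentially the same route as the paper: both arguments use Lemma~\ref{MainLemma} with $M=R_{g_0}$, $N=R_h$ to strip support elements one at a time while the $R_e$-bilinearity of the maps $E_g$ guarantees the $g_0$-component survives, until a non-zero element of $P\cap R_{g_0}$ is produced and simplicity finishes the job. The only cosmetic difference is that the paper phrases the descent as a minimal-support/contradiction argument whereas you run the iteration explicitly; the content is identical.
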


\begin{proof}
Take $x\in P \setminus \{0\}$ and $g\in \Supp(x)$.
Choose $y\in P \setminus \{0\}$ such that $|\Supp(y)|$ is minimal amongst all elements satisfying $g\in \Supp(y) \subseteq \Supp(x)$.
Seeking a contradiction, suppose that $|\Supp(y)|>1$.
Choose some $h \in \Supp(y) \setminus \{g\}$.
Using Lemma \ref{MainLemma}, with $S=R_e$, $M=R_g$ and $N=R_h$, we conclude that there is some
$y' \in P$ such that $|\Supp(y')| < |\Supp(y)|$ and $g \in \Supp(y') \subseteq \Supp(x)$.
This is a contradiction.
Hence, $P \cap R_g \neq \{0\}$. Using that $R_g$ is a simple $R_e$-bimodule we conclude that $R_g$ is an $R_e$-submodule of $P$.
From this it follows that $P=\oplus_{s\in S} R_s$ for some subset $S \subseteq G$.
\end{proof}

\begin{lemma}\label{Lemma:UpToIso}
Let $G$ be a group and let $R$ be a
$G$-graded ring
such that $R_g$ is a simple $R_e$-bimodule for each $g\in G$.
The following two assertions are equivalent:
\begin{enumerate}[{\rm (i)}]
	\item\label{UpToIso:RSRT} $R_S \cong R_T$ if and only if $S=T$ (where $S,T\subseteq G$).
	\item\label{UpToIso:RgRh} $R_g \cong R_h$ if and only if $g=h$ (where $g,h\in G$);
\end{enumerate}
\end{lemma}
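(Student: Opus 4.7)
The plan is to dispose of the direction \eqref{UpToIso:RSRT}$\Rightarrow$\eqref{UpToIso:RgRh} immediately: restricting the biconditional in \eqref{UpToIso:RSRT} to singletons $S=\{g\}$ and $T=\{h\}$ gives exactly \eqref{UpToIso:RgRh}. All the content therefore lies in the converse.

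For \eqref{UpToIso:RgRh}$\Rightarrow$\eqref{UpToIso:RSRT}, I would observe that the standing hypothesis of the lemma together with \eqref{UpToIso:RgRh} is precisely the hypothesis of Proposition~\ref{Prop:Surjectivity}, which supplies a very strong structural description of sub-bimodules of $R$. Let $S,T\subseteq G$, and suppose $\varphi\colon R_S\to R_T$ is an isomorphism of $R_e$-bimodules. Fix $s\in S$ and consider $\varphi(R_s)\subseteq R_T\subseteq R$. As $R_s$ is a simple $R_e$-bimodule and $\varphi$ is an injective bimodule homomorphism, $\varphi(R_s)$ is a non-zero simple $R_e$-bimodule inside $R$. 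Applying Proposition~\ref{Prop:Surjectivity} to this sub-bimodule produces a subset $U\subseteq G$ with $\varphi(R_s)=\oplus_{u\in U}R_u$; since each $R_u$ is non-zero and $\varphi(R_s)$ is simple, $U$ must be a singleton $\{t\}$. From $\varphi(R_s)=R_t\subseteq R_T$ we read off $t\in T$, and from $R_s\cong R_t$ together with \eqref{UpToIso:RgRh} we deduce $s=t\in T$. Hence $S\subseteq T$, and applying the same argument to $\varphi^{-1}$ yields $T\subseteq S$, so $S=T$.

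I do not anticipate any serious obstacle: the real work has already been carried out in Lemma~\ref{MainLemma} and Proposition~\ref{Prop:Surjectivity}, which jointly upgrade the per-component hypothesis \eqref{UpToIso:RgRh} to a complete structural description of the $R_e$-bimodules contained in $R$. The only subtlety worth flagging is the need to verify, using simplicity plus the non-vanishing of each $R_u$, that the set $U$ furnished by Proposition~\ref{Prop:Surjectivity} is a singleton rather than a larger (possibly infinite) subset of $G$; once that is in hand, the rest is a short bookkeeping argument.
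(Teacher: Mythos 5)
Your proposal is correct and follows essentially the same route as the paper: both directions are handled identically, with the nontrivial implication reduced to Proposition~\ref{Prop:Surjectivity} applied to the image $f(R_s)$ of each simple component, followed by the symmetry argument with $f^{-1}$. Your explicit verification that the subset $U$ produced by Proposition~\ref{Prop:Surjectivity} is a singleton is a point the paper leaves implicit, but it is the same argument.
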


\begin{proof}
\eqref{UpToIso:RSRT}$\Rightarrow$\eqref{UpToIso:RgRh}: This is trivial.\\
\eqref{UpToIso:RgRh}$\Rightarrow$\eqref{UpToIso:RSRT}:
The ''if'' statement is trivial.
Now we show the ''only if'' statement.
Suppose that $f : R_S \to R_T$ is an $R_e$-bimodule isomorphism.
Take $s\in S$.
Then $f(R_s)$ is a simple $R_e$-submodule of $R_T$.
By Proposition \ref{Prop:Surjectivity} we conclude that $f(R_s)=R_t$ for some $t\in T$.
This shows that $R_s \cong R_t$ and by \eqref{UpToIso:RgRh} we get $s=t$.
Thus, $s \in T$.
Using that $s$ was chosen arbitrarily, we get $S \subseteq T$.
In the same way we can show that $T\subseteq S$. This shows that $S=T$.
\end{proof}

We are now ready to prove the first main result of this article.

\begin{theorem}\label{MainResult}
Let $G$ be a group and let $R$ be a $G$-graded ring.
Then $R$ is $G$-controlled if and only if
(a) $R_g$ is a simple $R_e$-bimodule, for each $g\in G$;
and
(b) $R_g \cong R_h$ if and only if $g=h$, for $g,h\in G$.
\end{theorem}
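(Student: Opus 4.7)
The plan is to observe that this theorem is essentially a synthesis of the preceding results, and to verify that each of the two conditions in Definition \ref{DefBM} has already been addressed.

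For the forward implication, suppose $R$ is $G$-controlled. Then Proposition \ref{Prop:GeneralNecessaryConditions}\eqref{PropGenNec:SimpleModule} gives condition (a), and Proposition \ref{Prop:GeneralNecessaryConditions}\eqref{PropGenNec:RgCongRh} gives condition (b). This direction is immediate.

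For the reverse implication, assume (a) and (b) hold. First I would show that $\varphi$ is surjective: if $P \in \mathtt{Mod}_R(R_e)$, then Proposition \ref{Prop:Surjectivity} (whose hypotheses are precisely (a) and (b)) yields a subset $S \subseteq G$ with $P = \oplus_{s\in S} R_s = \varphi(S)$, and the empty case is covered by the convention $R_\emptyset = \{0\}$. Injectivity of $\varphi$ is a direct consequence of the directness of the decomposition $R = \oplus_{g\in G} R_g$ together with the fact that each $R_g$ is non-zero by (a): if $\oplus_{h\in H} R_h = \oplus_{h\in H'} R_h$ as subsets of $R$, then applying the homogeneous projections $E_g$ forces $H = H'$. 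This establishes condition (1) of Definition \ref{DefBM}.

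Finally, condition (2) of Definition \ref{DefBM} is exactly the statement of Lemma \ref{Lemma:UpToIso}: under hypothesis (a) the two assertions \eqref{UpToIso:RSRT} and \eqref{UpToIso:RgRh} of that lemma are equivalent, and \eqref{UpToIso:RgRh} is our hypothesis (b), so $\varphi(S) \cong \varphi(T)$ if and only if $S = T$. Since there is no real obstacle beyond correctly citing the prior results, the proof should be just a short assembly; the only tiny subtlety is remembering to invoke directness of the grading together with the non-vanishing of each $R_g$ for the injectivity of $\varphi$.
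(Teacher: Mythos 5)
Your proposal is correct and follows essentially the same route as the paper: the forward direction via Proposition~\ref{Prop:GeneralNecessaryConditions}, surjectivity of $\varphi$ via Proposition~\ref{Prop:Surjectivity}, injectivity via the projections $E_g$ (with the observation that each $R_g\neq\{0\}$), and condition (2) of Definition~\ref{DefBM} via Lemma~\ref{Lemma:UpToIso}. Nothing is missing.
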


\begin{proof}
The ''only if'' statement follows from Proposition~\ref{Prop:GeneralNecessaryConditions}.

We now show the ''if'' statement.
Suppose that (a) and (b) hold.
By using the maps $E_g$, for $g\in G$, we may conclude that $\varphi$ (in Definition~\ref{DefBM}) is injective.
By Proposition~\ref{Prop:Surjectivity}, $\varphi$ is surjective.
Hence, $\varphi$ is a bijection.
By Lemma \ref{Lemma:UpToIso} we get that assertion (2) of Definition~\ref{DefBM} holds.
This shows that $R$ is $G$-controlled.
\end{proof}

We shall now present an example of a $G$-graded ring which is $G$-controlled but not strongly $G$-graded.
Notice that this ring is not simple (cf. Proposition~\ref{GradedSimpleStronglyEquivalence}).

\begin{exmp}\label{GcontrolledNotStrong}
Consider the first Weyl algebra $\Acal_1=\C\langle x, y \rangle/(yx-xy-1)$. Recall that $\Acal_1$ is a simple Noetherian domain.
Take any automorphism $\alpha : \Acal_1 \to \Acal_1$ satisfying $\{a \in \Acal_1 \mid \alpha(a)=a\}=\C$ and $\alpha^n \neq \identity_{\Acal_1}$ if $n\neq 0$. (We may e.g. choose $\alpha$ defined by $x\mapsto x-1$ and $y\mapsto y+1$.)
Let us now define a free left $\Acal_1$-module $R=\oplus_{n \in \Z} \Acal_1 u_n$ with basis $\{u_n\}_{n\in \Z}$.
We define a multiplication on $R$ by
\begin{displaymath}
	a u_n \cdot b u_m =
	\left\{
	\begin{array}{ll}
			a \alpha^n(b) u_{n+m} & \text{if } n=0 \text{ or } m =0 \\
			0 & \text{otherwise.}
	\end{array}
	\right.
\end{displaymath}
It is not difficult to verify that this turns $R$ into a unital and associative ring which is $\Z$-graded, but not strongly $\Z$-graded.
Moreover, $R_n= \Acal_1 u_n$ is a simple $R_0$-bimodule for each $n\in \Z$.
Take $n,m\in \Z$ with $n\neq m$. We claim that $R_n$ and $R_m$ can not be isomorphic as $R_0$-bimodules.
Seeking a contradiction, suppose that there is an $R_0$-bimodule isomorphism
$f : R_n=\Acal_1 u_n \to R_m=\Acal_1 u_m$.
Then there is some $c\in \Acal_1$ such that $f(1 u_n)=c u_m$.
But then $c^2 u_m = c(c u_m) = c f(u_n) = f(c u_n) = f(u_n \alpha^{-n}(c)) = 
f(u_n) \alpha^{-n}(c) = c u_m \alpha^{-n}(c) = c \alpha^{m-n}(c) u_m$.
Using that $R_m$ is a free left $\Acal_1$-module, we get $c^2=c \alpha^{m-n}(c)$.
By our assumptions we conclude that $c\in \C = Z(\Acal_1)$.
Now, take any $b\in \Acal_1 \setminus \C$.
Then, we get
\begin{displaymath}
c \alpha^m(b) u_m = c u_m b = f(u_n)b = f(u_n b) = f( \alpha^{n}(b) u_n )
= \alpha^{n}(b) f(u_n) = \alpha^{n}(b) c u_m
\end{displaymath}
and hence
$c \alpha^m(b) = \alpha^{n}(b) c = c \alpha^{n}(b)$.
By our assumptions this is a contradiction.
Using Theorem~\ref{MainResult} we conclude that $R$ is a $\Z$-controlled ring.
\end{exmp}

\section{A characterization of $G$-controlled strongly $G$-graded rings}\label{Sec:CharStrongly}

In this section we give a characterization of $G$-controlled rings which are strongly $G$-graded (see Theorem~\ref{MainResult2}).
We begin by noticing that by Example~\ref{GcontrolledNotStrong} there exist $G$-controlled which are not strongly $G$-graded. In many cases, however, $G$-controlness will force the gradation to be strong (see Proposition~\ref{GradedSimpleStronglyEquivalence}).

\begin{lemma}\label{Prop:GradedSimpleStronglyGraded}
Let $G$ be a group and let $R$ be a $G$-graded ring.
If $R$ is $G$-controlled, then the following two assertions hold:
\begin{enumerate}[{\rm (i)}]
	\item\label{Lem:GSSGi} If $R_g R_{g^{-1}} = \{0\}$, then $R_{g^{-1}} R_g = \{0\}$;
	\item\label{Lem:GSSGii} If $R_g R_{g^{-1}} = R_e$, then $R_{g^{-1}} R_g = R_e$.
\end{enumerate}
\end{lemma}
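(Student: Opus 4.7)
The plan is to leverage Proposition~\ref{Prop:GeneralNecessaryConditions}. Since $R$ is $G$-controlled, $R_e$ is a simple ring, so its only two-sided ideals are $\{0\}$ and $R_e$ itself. For any $g\in G$, the product $R_g R_{g^{-1}}$ is an $R_e$-sub-bimodule of $R_e$, which (since the $R_e$-bimodule structure on $R_e$ comes from multiplication in $R_e$) is just a two-sided ideal of $R_e$. Hence
\[
R_g R_{g^{-1}} \in \{\{0\},\, R_e\} \quad\text{and}\quad R_{g^{-1}} R_g \in \{\{0\},\, R_e\}.
\]
This dichotomy is what drives both parts.

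For \eqref{Lem:GSSGi} I would argue by contradiction. Assume $R_g R_{g^{-1}} = \{0\}$ but $R_{g^{-1}} R_g \neq \{0\}$; then $R_{g^{-1}} R_g = R_e$, so there exist $a_1,\ldots,a_n\in R_{g^{-1}}$ and $b_1,\ldots,b_n\in R_g$ with $1_R = \sum_{i=1}^n a_i b_i$. By Proposition~\ref{Prop:GeneralNecessaryConditions}\eqref{PropGenNec:SimpleModule} we may pick a nonzero $x\in R_g$. Then
\[
x \;=\; x\cdot 1_R \;=\; \sum_{i=1}^n (x a_i) b_i,
\]
but each $x a_i\in R_g R_{g^{-1}} = \{0\}$, giving $x=0$, a contradiction.

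Part \eqref{Lem:GSSGii} should then fall out of \eqref{Lem:GSSGi} by a short symmetry argument. Applying \eqref{Lem:GSSGi} with $g$ replaced by $g^{-1}$ gives: $R_{g^{-1}} R_g = \{0\}$ implies $R_g R_{g^{-1}} = \{0\}$. Now if $R_g R_{g^{-1}} = R_e$, then in particular $R_g R_{g^{-1}} \neq \{0\}$ (as $1_R\in R_e$), so by the contrapositive $R_{g^{-1}} R_g \neq \{0\}$, and the dichotomy above forces $R_{g^{-1}} R_g = R_e$.

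There isn't a real obstacle here beyond recognizing the two small moves: identifying $R_g R_{g^{-1}}$ and $R_{g^{-1}} R_g$ as ideals of the simple ring $R_e$, and using a nonzero element of $R_g$ (supplied by $G$-controlness) to rule out the non-symmetric case. Everything else is bookkeeping via the unit.
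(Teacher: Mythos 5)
Your proof is correct and follows essentially the same route as the paper's: both arguments rest on the observation that $R_g R_{g^{-1}}$ and $R_{g^{-1}} R_g$ are ideals of the simple ring $R_e$ (equivalently, sub-bimodules of the simple bimodule $R_e$), hence each equals $\{0\}$ or $R_e$, and then derive the contradiction $R_g = R_g(R_{g^{-1}}R_g) = (R_gR_{g^{-1}})R_g = \{0\}$ — your element-level computation with $1_R = \sum a_i b_i$ is just this identity spelled out. Your symmetry argument for part (ii) fills in a step the paper leaves implicit ("this follows from (i)"), but it is the intended deduction.
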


\begin{proof}
We first notice that by Proposition~\ref{Prop:GeneralNecessaryConditions}\eqref{PropGenNec:SimpleModule},
$R_g$ is a (non-zero) simple $R_e$-bimodule for each $g\in G$.

\noindent \eqref{Lem:GSSGi}:
Suppose that $\{0\} = R_g R_{g^{-1}}$ holds.
Seeking a contradiction, suppose that $R_{g^{-1}} R_g \neq \{0\}$.
Then $R_{g^{-1}} R_g = R_e$ and hence
$R_g = R_g R_e = R_g (R_{g^{-1}} R_g) = \{0\} R_g = \{0\}$.
By Proposition~\ref{Prop:GeneralNecessaryConditions}\eqref{PropGenNec:SimpleModule}, this is a contradiction.

\noindent \eqref{Lem:GSSGii}:
This follows from \eqref{Lem:GSSGi}.
\end{proof}

\begin{prop}\label{GradedSimpleStronglyEquivalence}
Let $G$ be a group and let $R$ be a $G$-graded ring.
If $R$ is $G$-controlled, then the following four assertions are equivalent:
\begin{enumerate}[{\rm (i)}]
	\item\label{Prop:EQ1} $R$ is graded simple;
	\item\label{Prop:EQ2} $R$ is simple;
	\item\label{Prop:EQ3} $R$ is strongly $G$-graded;
	\item\label{Prop:EQ4} The $G$-gradation on $R$ is left (and right) non-degenerate.
\end{enumerate}
\end{prop}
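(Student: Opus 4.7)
The strategy is to dispose of the easier equivalences first and then concentrate on the main implication $(\mathrm{i}) \Rightarrow (\mathrm{iii})$.

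\emph{Easy equivalences.} The equivalence $(\mathrm{i}) \Leftrightarrow (\mathrm{ii})$ is immediate from Proposition~\ref{Prop:GeneralNecessaryConditions}\eqref{PropGenNec:GradedIdeal}, which already guarantees that every ideal of a $G$-controlled ring is graded. For $(\mathrm{iii}) \Leftrightarrow (\mathrm{iv})$, the forward direction is tautological; for the reverse I would observe that $R_g R_{g^{-1}}$ is an $R_e$-sub-bimodule of the simple ring $R_e$ (Proposition~\ref{Prop:GeneralNecessaryConditions}\eqref{PropGenNec:SimpleRe}), so non-degeneracy upgrades to $R_g R_{g^{-1}} = R_e$ for every $g$, and then the standard identity $R_{gh} = R_e R_{gh} = R_g R_{g^{-1}} R_{gh} \subseteq R_g R_h$ delivers strong grading. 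For $(\mathrm{iii}) \Rightarrow (\mathrm{i})$ I would use the familiar argument: pick $0 \neq x \in I \cap R_g$ inside a non-zero graded ideal $I$; then $R_{g^{-1}} x \subseteq I \cap R_e$ must be non-zero (else $R_e x = R_g R_{g^{-1}} x = 0$), and simplicity of $R_e$ forces $I = R$.

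\emph{The main direction $(\mathrm{i}) \Rightarrow (\mathrm{iii})$.} The plan is to isolate the indices for which the grading is already strong and then annihilate the rest via graded simplicity. Set
\[
H = \{\, g \in G \mid R_g R_{g^{-1}} = R_e \,\},
\]
which, by simplicity of $R_e$, coincides with $\{g : R_g R_{g^{-1}} \neq 0\}$. A short check shows $H$ is a subgroup: $e \in H$ is clear; closure under inversion is Lemma~\ref{Prop:GradedSimpleStronglyGraded}\eqref{Lem:GSSGii}; and for $g,h \in H$,
\[
R_{gh} R_{(gh)^{-1}} \supseteq R_g R_h R_{h^{-1}} R_{g^{-1}} = R_g R_e R_{g^{-1}} = R_e.
\]
Setting $K = G \setminus H$, the goal is to prove that $R_K = \oplus_{g \in K} R_g$ is a two-sided ideal of $R$; since $1 \in R_e$ lies outside $R_K$, graded simplicity then forces $R_K = 0$, hence $K = \emptyset$, hence $R$ is strongly $G$-graded.

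\emph{The crux and the expected obstacle.} The hardest step is verifying the ideal property of $R_K$. Here I would exploit Proposition~\ref{Prop:GeneralNecessaryConditions}\eqref{PropGenNec:SimpleModule}: each $R_{hg}$ is a simple $R_e$-bimodule, so $R_h R_g$ is automatically either $0$ or all of $R_{hg}$. Left absorption thus reduces to showing that, whenever $g \in K$ and $R_h R_g = R_{hg}$, one must have $hg \in K$. Supposing the contrary, $hg \in H$ yields
\[
R_e = R_{hg} R_{(hg)^{-1}} = R_h \cdot \bigl( R_g R_{g^{-1} h^{-1}} \bigr),
\]
so $R_g R_{g^{-1} h^{-1}}$ is non-zero, hence equals $R_{h^{-1}}$ by simplicity; substituting back gives $R_h R_{h^{-1}} = R_e$, so $h \in H$, and the subgroup property forces $g = h^{-1}(hg) \in H$, contradicting $g \in K$. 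A symmetric calculation (starting from $R_e = R_{gh} R_{(gh)^{-1}} = (R_g R_h) R_{h^{-1} g^{-1}}$ and isolating $R_h R_{h^{-1} g^{-1}} = R_{g^{-1}}$) handles right absorption. The rest of the implication is then automatic. Everything else in the proof is either standard strong-grading bookkeeping or an application of the structural results already established in Proposition~\ref{Prop:GeneralNecessaryConditions}; only the absorption verification for $R_K$ really requires the $G$-controlled hypothesis beyond those corollaries.
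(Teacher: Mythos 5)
Your proposal is correct, but the key implication (i)$\Rightarrow$(iii) is handled by a genuinely different argument than the paper's. The paper works pointwise: for a fixed $g$ it uses graded simplicity to produce $s,t$ with $sgt=e$ and $R_sR_gR_t=R_e$, and then a short chain of manipulations (using Lemma~\ref{Prop:GradedSimpleStronglyGraded}\eqref{Lem:GSSGii}) extracts $R_{g^{-1}}R_g=R_e$ directly. You instead introduce the ``support subgroup'' $H=\{g\mid R_gR_{g^{-1}}=R_e\}$, verify it is a subgroup (inversion via the same Lemma~\ref{Prop:GradedSimpleStronglyGraded}\eqref{Lem:GSSGii}), and show that $R_{G\setminus H}$ is a graded two-sided ideal not containing $1$, which graded simplicity kills; the absorption check is where you use simplicity of each $R_{hg}$ to force $R_hR_g\in\{0,R_{hg}\}$ and then derive the contradiction $g\in H$. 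Both arguments are sound; yours is structurally cleaner in that it isolates exactly where the $G$-controlled hypothesis enters (the simplicity of the homogeneous components as bimodules), at the cost of the extra bookkeeping of proving $H$ is a subgroup and $R_{G\setminus H}$ is an ideal, while the paper's computation is shorter but less transparent. Your routing of the remaining equivalences also differs slightly (you prove (i)$\Leftrightarrow$(ii) and (iv)$\Rightarrow$(iii) directly, whereas the paper closes the cycle via (iv)$\Rightarrow$(ii)$\Rightarrow$(i)$\Rightarrow$(iii)$\Rightarrow$(iv)), but all of those steps are routine and correct as you present them.
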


\begin{proof}
\noindent \eqref{Prop:EQ3}$\Rightarrow$\eqref{Prop:EQ4}: This is clear.

\noindent \eqref{Prop:EQ4}$\Rightarrow$\eqref{Prop:EQ2}:
Suppose that the $G$-gradation on $R$ is left (and right) non-degenerate. 
Let $I$ be a non-zero ideal of $R$. It follows from Proposition~\ref{Prop:GeneralNecessaryConditions}\eqref{PropGenNec:GradedIdeal}
that $I$ is graded. Hence, by the assumption and 
Proposition~\ref{Prop:GeneralNecessaryConditions}\eqref{PropGenNec:SimpleRe}
we conclude that $R_e \subseteq I$. Thus, $I=R$.

\noindent \eqref{Prop:EQ2}$\Rightarrow$\eqref{Prop:EQ1}: This is clear.

\noindent \eqref{Prop:EQ1}$\Rightarrow$\eqref{Prop:EQ3}:
Suppose that $R$ is graded simple.
Take $g\in G$ and notice that $R_g$ is non-zero.
By graded simplicity there are some $s,t\in G$ such that $sgt=e$ and $R_s R_g R_t = R_e$.
From this we get
\begin{equation}\label{Lem:GSSGfirst}
	R_s R_g R_t R_{t^{-1}} = R_e R_{t^{-1}} = R_{t^{-1}} \neq \{0\}.
\end{equation}
Hence, $R_t R_{t^{-1}} \neq \{0\}$ and therefore $R_t R_{t^{-1}} = R_e$.
From this we get
$R_s R_g = R_{t^{-1}}$ and $R_{t^{-1}} R_t = R_e$, using \eqref{Lem:GSSGii}.
From \eqref{Lem:GSSGfirst} we get
\begin{equation}\label{Lem:GSSGsecond}
	R_t R_s R_g = R_t R_{t^{-1}} = R_e
\end{equation}
and hence $tsg=e$, i.e. $ts=g^{-1}$.
Since $R_t R_s \subseteq R_{g^{-1}}$, this shows that $R_{g^{-1}} R_g \neq \{0\}$
which yields $R_{g^{-1}} R_g =R_e \ni 1_R$. Hence, $R$ is a strongly $G$-graded ring.
\end{proof}

\begin{remark}\label{Rem:GcontrolledStrongly}
If a $G$-controlled ring is e.g. crystalline graded \cite{NVO2008} or epsilon-strongly graded \cite{NOP2016}, then it is necessarily strongly $G$-graded. This follows from Proposition~\ref{GradedSimpleStronglyEquivalence}
and the fact that both crystalline graded rings and epsilon-strongly graded rings are left (and right) non-degenerate (cf. \cite[Definition 2]{OL2012}).
\end{remark}

Recall that if $T$ is a ring, then a $T$-bimodule $M$ is said to be \emph{invertible}
if there is a $T$-bimodule $N$ such that $M \otimes_T N \cong T$ and $N \otimes_T M \cong T$.
The Picard group of a ring $T$, denoted by $\Pic(T)$, consists of all equivalence classes of invertible $T$-bimodules
and the group operation is given by $\otimes_T$.
Using that $R$ is strongly $G$-graded, the map
$\psi : G \to \Pic(R_e), g \mapsto [R_g]$ is a group homomorphism (see e.g. \cite[Corollary 3.1.2]{NVO2004}).

For strongly $G$-graded rings, we record the following observation.

\begin{lemma}\label{lemma:CentralizerPicard}
Let $G$ be a group and let $R$ be a strongly $G$-graded ring.
Consider the following assertions:
\begin{enumerate}[{\rm (i)}]
	\item\label{lemma:CenPicI} $C_R(R_e)=Z(R_e)$;
	\item\label{lemma:CenPicII} The group homomorphism $\psi : G \to \Pic(R_e), g \mapsto [R_g]$ is injective.
\end{enumerate}
The following conclusions hold:
\begin{enumerate}[{\rm (a)}]
	\item\label{lemma:CenPicA} {\rm (i)} implies {\rm (ii)};
	\item\label{lemma:CenPicB} If $R_e$ is a simple ring,
	then {\rm (i)} holds if and only if {\rm (ii)} holds;
	\item\label{lemma:CenPicC} If $R$ is $G$-controlled, then both {\rm (i)} and {\rm (ii)} hold.
\end{enumerate}
\end{lemma}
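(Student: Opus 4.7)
The plan is to dispatch (c) immediately from (a) together with Proposition~\ref{Prop:GeneralNecessaryConditions}\eqref{PropGenNec:Centralizer}, so the real work is in (a) and (b). For (a), I would show $\ker \psi = \{e\}$: if $\psi(g) = [R_e]$, choose an $R_e$-bimodule isomorphism $f : R_e \to R_g$ and set $x = f(1) \in R_g$. The bimodule identity $f(a) = a f(1) = f(1) a$ forces $ax = xa$ for every $a \in R_e$, so $x \in C_R(R_e) \cap R_g$. By hypothesis $C_R(R_e) = Z(R_e) \subseteq R_e$, hence $x \in R_e \cap R_g = \{0\}$ when $g \neq e$, which makes $f = 0$, contradicting $R_e \neq \{0\}$. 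Therefore $g = e$.

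For (b), the content is the implication (ii)$\Rightarrow$(i). Given $y \in C_R(R_e)$ with homogeneous decomposition $y = \sum_g y_g$, comparing degrees in $ay = ya$ shows each component $y_g$ also lies in $C_R(R_e)$, so it suffices to prove that a non-zero $x \in C_R(R_e) \cap R_g$ forces $g = e$. I would define the $R_e$-bimodule map $f : R_e \to R_g$, $a \mapsto ax = xa$, and aim to show it is an isomorphism, which would contradict the injectivity of $\psi$. The key observation is that $R_{g^{-1}} x$ and $x R_{g^{-1}}$ are two-sided ideals of $R_e$ (using strong grading, $R_{g^{-1}} R_g = R_g R_{g^{-1}} = R_e$), each non-zero because $R_{g^{-1}} x = \{0\}$ would imply $R_e x = R_g R_{g^{-1}} x = \{0\}$ and hence $x = 0$. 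Simplicity of $R_e$ then forces $R_{g^{-1}} x = R_e = x R_{g^{-1}}$, from which surjectivity ($R_e x = R_g R_{g^{-1}} x = R_g$) and injectivity ($ax = 0 \Rightarrow aR_e = a \cdot x R_{g^{-1}} = \{0\}$) of $f$ follow, giving $[R_g] = [R_e]$ and the sought contradiction.

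Part (c) is then essentially free: Proposition~\ref{Prop:GeneralNecessaryConditions}\eqref{PropGenNec:Centralizer} yields (i) for every $G$-controlled ring, and (a) converts (i) into (ii). The main obstacle is the upgrade step in part (b): promoting the bimodule homomorphism $f$ to an isomorphism requires both hypotheses simultaneously---simplicity of $R_e$ to squeeze the ideals $R_{g^{-1}} x$ and $x R_{g^{-1}}$ all the way up to $R_e$, and strong $G$-gradedness to place those products inside $R_e$ in the first place---without either of which one can only conclude that $f$ is non-zero, not invertible.
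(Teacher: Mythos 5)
Your proposal is correct and follows essentially the same route as the paper: part (a) via $f(1_R)\in C_R(R_e)\cap R_g$, part (b) by showing the non-zero ideals $x R_{g^{-1}}$ and $R_{g^{-1}}x$ of $R_e$ equal $R_e$ by simplicity and using this to make $a\mapsto ax$ an $R_e$-bimodule isomorphism $R_e\to R_g$ (the paper packages this as invertibility of $x$ before writing down the same map), and part (c) from Proposition~\ref{Prop:GeneralNecessaryConditions}\eqref{PropGenNec:Centralizer} plus (a). No gaps.
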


\begin{proof}
\eqref{lemma:CenPicA}: Suppose that \eqref{lemma:CenPicI} holds.
Take $g\in G$ such that $R_g \cong R_e$, as $R_e$-bimodules.
Then there is an $R_e$-bimodule isomorphism
$f : R_e \to R_g$.
We notice that $0\neq f(1_R) \in R_g$.
For any $a\in R_e$ we have
$a f(1_R)=f(a 1_R)=f(1_R a) = f(1_R)a$,
showing that $f(1_R) \in C_R(R_e)=Z(R_e) \subseteq R_e$.
Thus, $g=e$.
This shows that $\psi$ is injective.

\eqref{lemma:CenPicB}:
Let $R_e$ be a simple ring and suppose that \eqref{lemma:CenPicII} holds.
Notice that $C_R(R_e)$ is a $G$-graded ring.
Take $g\in G$ and a non-zero $x_g \in C_R(R_e) \cap R_g$.
The set $I=x_g R_{g^{-1}} \subseteq R_e$ is a non-zero
ideal of $R_e$.
Indeed, by the strong gradation we get $x_g R_{g^{-1}} \neq \{0\}$
and from the fact that $R_{g^{-1}}$ is an $R_e$-bimodule
and that $x_g \in C_R(R_e)$, it follows that $I$ is an ideal of $R_e$.
By simplicity of $R_e$ we get $I=R_e$.
In particular, there is some $y_{g^{-1}}\in R_{g^{-1}}$
such that $x_g y_{g^{-1}} = 1_R$.
Symmetrically we get that $R_{g^{-1}} x_g = R_e$
which yields that $x_g$ also has a left inverse.
Hence, $x_g$ is invertible.

$R_e x_g \subseteq R_g$ and $R_g y_{g^{-1}} \subseteq R_e$.
Using that $y_{g^{-1}} x_g = 1_R$ we get $R_g \subseteq R_e x_g$.
This shows that $R_g = R_e x_g$.

Notice that $f : R_e \to R_g=R_e x_g, r \mapsto r x_g$
is an isomorphism of $R_e$-bimodules.

By injectivity of $\psi$ we conclude that $g=e$.
Hence, $C_R(R_e)\subseteq R_e$ which yields $C_R(R_e)=Z(R_e)$.

\eqref{lemma:CenPicC}:
This follows from \eqref{lemma:CenPicA}
and
Proposition~\ref{Prop:GeneralNecessaryConditions}\eqref{PropGenNec:Centralizer}.
\end{proof}

We are now ready to prove the second main result of this article.

\begin{theorem}\label{MainResult2}
Let $G$ be a group and let $R$ be a strongly $G$-graded ring.
The following three assertions are equivalent:
\begin{enumerate}[{\rm (i)}]
	\item $R$ is $G$-controlled;
	\item $R_g$ is a simple $R_e$-bimodule, for every $g\in G$, and $C_R(R_e)=Z(R_e)$;
	\item $R_g$ is a simple $R_e$-bimodule, for every $g\in G$, and the group homomorphism $\psi : G \to \Pic(R_e), g \mapsto [R_g]$ is injective.
\end{enumerate}
\end{theorem}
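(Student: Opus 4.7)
The plan is to establish the cycle (i) $\Rightarrow$ (ii) $\Leftrightarrow$ (iii) $\Rightarrow$ (i), each implication being short given what has already been proved.

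First I would dispatch (i) $\Rightarrow$ (ii). Since $R$ is $G$-controlled, Proposition~\ref{Prop:GeneralNecessaryConditions}\eqref{PropGenNec:SimpleModule} immediately gives that $R_g$ is a simple $R_e$-bimodule for every $g\in G$, and part~\eqref{PropGenNec:Centralizer} of the same proposition gives $C_R(R_e)=Z(R_e)$. No strong-gradation is even needed here.

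Next I would treat (ii) $\Leftrightarrow$ (iii). Both assertions share the simplicity hypothesis on the $R_e$-bimodules $R_g$, so the content is only the equivalence between $C_R(R_e)=Z(R_e)$ and the injectivity of $\psi$. Observe that the common hypothesis applied to $g=e$ implies that $R_e$ is a simple $R_e$-bimodule, which is to say $R_e$ is a simple ring. Under this assumption, Lemma~\ref{lemma:CentralizerPicard}\eqref{lemma:CenPicB} furnishes precisely the desired equivalence. (The hypothesis that $R$ is strongly $G$-graded is of course needed for the homomorphism $\psi$ to be well defined.)

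Finally I would show (iii) $\Rightarrow$ (i). By Theorem~\ref{MainResult} it suffices to verify that for $g,h \in G$, the relation $R_g \cong R_h$ (as $R_e$-bimodules) forces $g=h$. Because $R$ is strongly $G$-graded, each $R_g$ is an invertible $R_e$-bimodule and $\psi$ is a group homomorphism; an isomorphism $R_g \cong R_h$ therefore yields $\psi(g) = [R_g] = [R_h] = \psi(h)$, and injectivity of $\psi$ gives $g=h$. The hypothesis that each $R_g$ is a simple $R_e$-bimodule, together with the condition just verified, places us exactly in the situation of Theorem~\ref{MainResult}, so $R$ is $G$-controlled.

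The main obstacle, as far as there is one, is merely recognising that (ii) already secretly contains the simplicity of $R_e$ as a ring, so that Lemma~\ref{lemma:CentralizerPicard}\eqref{lemma:CenPicB} applies unconditionally; apart from this, the argument is essentially a bookkeeping of the results already established in Sections \ref{Sec:CharGeneral} and \ref{Sec:CharStrongly}.
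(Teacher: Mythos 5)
Your proposal is correct and follows essentially the same route as the paper, which simply cites Lemma~\ref{lemma:CentralizerPicard} and Theorem~\ref{MainResult}; your write-up fills in the details (in particular the observation that the shared simplicity hypothesis applied to $g=e$ makes $R_e$ a simple ring, so that Lemma~\ref{lemma:CentralizerPicard}\eqref{lemma:CenPicB} applies, and that $R_g\cong R_h$ iff $\psi(g)=\psi(h)$ links condition (b) of Theorem~\ref{MainResult} to injectivity of $\psi$). Nothing is missing.
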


\begin{proof}
This follows from Lemma~\ref{lemma:CentralizerPicard} and Theorem~\ref{MainResult}.
\end{proof}

By combining
Theorem~\ref{MainResult2}
and Proposition~\ref{GradedSimpleStronglyEquivalence}
we get the following generalization of \cite[Theorem 4(1)]{Azumaya1946}.

\begin{corollary}\label{Cor:StrongSimplicity}
If $R_g$ is a simple $R_e$-bimodule for each $g\in G$, and $C_R(R_e)=Z(R_e)$,
then the strongly $G$-graded ring $R$ is a simple ring.
\end{corollary}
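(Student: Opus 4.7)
The plan is to chain together the two results explicitly mentioned just before the corollary. The hypothesis gives us a strongly $G$-graded ring $R$ in which each homogeneous component $R_g$ is a simple $R_e$-bimodule and $C_R(R_e) = Z(R_e)$. These are exactly the conditions appearing in assertion (ii) of Theorem~\ref{MainResult2}.

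First I would apply Theorem~\ref{MainResult2}, specifically the implication (ii)$\Rightarrow$(i), to conclude that $R$ is $G$-controlled. Next, since $R$ is now known to be $G$-controlled and is strongly $G$-graded by hypothesis, Proposition~\ref{GradedSimpleStronglyEquivalence} applies. That proposition asserts the equivalence of (among others) ``$R$ is strongly $G$-graded'' and ``$R$ is simple'' under the $G$-controlled assumption. Invoking the implication (iii)$\Rightarrow$(ii) of that proposition, we immediately obtain that $R$ is simple.

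There is no real obstacle here: once Theorem~\ref{MainResult2} and Proposition~\ref{GradedSimpleStronglyEquivalence} are available, the corollary is a one-line composition of two implications, and no additional computation is required. The only thing to double-check is that the hypotheses of the corollary genuinely match assertion (ii) of Theorem~\ref{MainResult2} verbatim (they do), and that ``strongly $G$-graded'' from the hypothesis can be used as the input for the (iii)$\Rightarrow$(ii) step of Proposition~\ref{GradedSimpleStronglyEquivalence} (which it can, since the proposition is conditional on $R$ being $G$-controlled, a status we established in the previous step).
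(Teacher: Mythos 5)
Your proposal is correct and matches the paper's own argument exactly: the paper states the corollary is obtained ``by combining Theorem~\ref{MainResult2} and Proposition~\ref{GradedSimpleStronglyEquivalence}'', which is precisely the chain (ii)$\Rightarrow$(i) of the theorem followed by (iii)$\Rightarrow$(ii) of the proposition that you describe. No gaps.
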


The following corollary is an algebraic analogue of 
\cite[Theorem 4.4(i)]{CameronSmith2015}.

\begin{corollary}\label{MainResult:CrossedProducts}
Let $G$ be a group and let $R$ be a $G$-crossed product.
The following three assertions are equivalent:
\begin{enumerate}[{\rm (i)}]
	\item $R$ is $G$-controlled;
	\item $R_e$ is a simple ring and $C_R(R_e)=Z(R_e)$;
	\item $R_e$ is a simple ring and for every invertible $u_g \in R_g$, $g\neq e$,
	the automorphism of $R_e$, defined by $\sigma_g(a)=u_g a u_g^{-1}$ for $a\in R_e$,
	is outer.
\end{enumerate}
\end{corollary}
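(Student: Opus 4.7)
The plan is to combine Theorem~\ref{MainResult2} (which applies because every $G$-crossed product is strongly $G$-graded) with the special structure $R_g = R_e u_g = u_g R_e$ coming from the choice of an invertible $u_g$ in each homogeneous component. The two conditions in Theorem~\ref{MainResult2}, namely ``$R_g$ is a simple $R_e$-bimodule for every $g$'' and ``$C_R(R_e) = Z(R_e)$'', should each collapse into much more concrete conditions in the crossed-product setting: the first into simplicity of $R_e$, and the second into outerness of all non-trivial $\sigma_g$.

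For the equivalence (i)$\Leftrightarrow$(ii), I would fix $g \in G$ and observe that the map $I \mapsto I u_g$ gives an inclusion-preserving bijection between two-sided ideals of $R_e$ and $R_e$-sub-bimodules of $R_g$; this uses that the right $R_e$-action on $R_e u_g$ reads $(r u_g) b = (r \sigma_g(b)) u_g$, so preservation of sub-bimodules is equivalent to $\sigma_g$-stability, which for an automorphism $\sigma_g$ is automatic for two-sided ideals. Hence $R_g$ is a simple $R_e$-bimodule if and only if $R_e$ is a simple ring, and this equivalence (already visible at $g = e$) lets me rewrite condition (ii) of Theorem~\ref{MainResult2} as condition (ii) of the corollary.

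For (ii)$\Leftrightarrow$(iii), I would argue by direct homogeneous analysis. For (ii)$\Rightarrow$(iii), if some $\sigma_g$ ($g \neq e$) were inner via $\sigma_g(a) = c a c^{-1}$ with $c \in U(R_e)$, the element $c^{-1} u_g \in R_g \setminus \{0\}$ centralises $R_e$ but does not lie in $R_e$, contradicting $C_R(R_e) = Z(R_e) \subseteq R_e$. For (iii)$\Rightarrow$(ii), take $x = \sum_g c_g u_g \in C_R(R_e)$ and equate homogeneous components of $ax = xa$ to obtain $a c_g = c_g \sigma_g(a)$ for all $a \in R_e$ and all $g$. Rewriting this as $c_g b = \sigma_g^{-1}(b) c_g$ shows that $R_e c_g$ (and symmetrically $c_g R_e$) is a two-sided ideal of $R_e$, so simplicity of $R_e$ forces either $c_g = 0$ or $c_g \in U(R_e)$; the latter would yield $\sigma_g(a) = c_g^{-1} a c_g$, contradicting outerness for $g \neq e$. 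The only surviving component is $c_e$, and the relation $a c_e = c_e a$ puts it in $Z(R_e)$. The one cosmetic point to address first is that (iii) quantifies over every invertible $u_g$: a short remark that two such choices differ by a unit in $R_e$ and induce conjugate automorphisms shows outerness is independent of the representative, so the statement is unambiguous. The main obstacle is the implication (iii)$\Rightarrow$(ii), specifically the step that leverages the intertwining identity to exhibit $R_e c_g$ as a two-sided ideal and thereby force invertibility of $c_g$ from simplicity of $R_e$; everything else is bookkeeping or direct verification.
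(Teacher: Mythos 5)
Your proposal is correct and follows essentially the same route as the paper: both reduce to Theorem~\ref{MainResult2}, both derive outerness from $C_R(R_e)=Z(R_e)$ by observing that an inner $\sigma_g$ produces the centralizing element $c^{-1}u_g\notin R_e$, and both prove the converse by showing that a homogeneous centralizing component $c_g u_g$ forces $R_ec_g=c_gR_e$ to be a two-sided ideal, hence $c_g$ invertible and $\sigma_g$ inner. The only differences are cosmetic: you arrange the implications as (i)$\Leftrightarrow$(ii) and (ii)$\Leftrightarrow$(iii) rather than the paper's cycle, you make explicit the bijection between ideals of $R_e$ and sub-bimodules of $R_g=R_eu_g$ (which the paper uses only in one direction), and you add a useful remark, absent from the paper, that outerness in (iii) is independent of the choice of invertible $u_g$.
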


\begin{proof}
Put $R=\Rskew$.\\
(i)$\Rightarrow$(ii):
This follows immediately from Proposition~\ref{Prop:GeneralNecessaryConditions}.

(ii)$\Rightarrow$(iii):
Suppose that (ii) holds.
Take $g\in G$. Suppose that $\sigma_g$ is inner, i.e. there is some invertible $v\in R_e$ such that
$\sigma_g(a)=u_g a u_{g^{-1}}=vav^{-1}$ holds for all $a\in R_e$.
From this we get that $av^{-1} u_g = v^{-1} u_g a$ holds for all $a\in R_e$.
Hence, $v^{-1} u_g \in C_R(R_e)=Z(R_e) \subseteq R_e$ and therefore we must have $g=e$.
This shows that (iii) holds.

(iii)$\Rightarrow$(i):
Suppose that (iii) holds.
We begin by noting that $C_{R}(R_e)$ is a $G$-graded subring of $R$.
Suppose that $a_g u_g \in C_{R}(R_e)$, for some $g\in G$.
By definition, $r a_g u_g = a_g u_g r$ for each $r\in R_e$.
Hence, $ra_g = a_g \sigma_g(r)$ for each $r\in R_e$.
From this we get that $a_g R_e = R_eA a_g$ is a non-zero two-sided ideal of $R_e$.
By simplicity of $R_e$ we conclude that $a_g$ is invertible.
Hence, $a_g^{-1} r a_g = \sigma_g(r)$ for each $r\in R_e$.
In other words, $\sigma_g$ is inner.
By the assumption on outerness we conclude that $g=e$.
This shows that $C_{R}(R_e) \subseteq R_e$, from which we get $C_R(R_e)=Z(R_e)$.
Using that $R_e$ is a simple ring we conclude that $R_e u_g$ is a simple $R_e$-bimodule for each $g\in G$.
The desired conclusion now follows directly from Theorem~\ref{MainResult2}.
\end{proof}

\begin{remark}\label{Rem:OuternessAzumaya}
For a skew group ring $A \rtimes_\sigma G$, 
Corollary~\ref{MainResult:CrossedProducts}(iii) means that $A$ is a simple ring and that the action of $G$ on $A$ is \emph{outer}
(see e.g \cite{Montgomery1980} or \cite{O2014}). 
\end{remark}

\begin{remark}
(a) Suppose that $R_e$ is a finite-dimensional central simple algebra.
By the Skolem--Noether theorem, every automorphism of $R_e$ is inner.
Hence, no $G$-controlled skew group ring $R$ (over $R_e$) can exist.

(b) Recall that each non-identity automorphism of the first Weyl algebra $\Acal_1$ is outer.
Hence, by taking any non-identity automorphism $\sigma_1 : \Acal_1 \to \Acal_1$ we may form
a $\Z$-controlled skew group ring $\Acal_1 \rtimes_\sigma \Z$.
\end{remark}

\section{Subrings of strongly $G$-graded rings}\label{Sec:Subrings}

In this section we give a description of certain subrings
of $G$-controlled rings.
We begin with the following result which generalizes \cite[Theorem 4(2)]{Azumaya1946}.

\begin{prop}\label{Prop:Subring}
If $R$ is a strongly $G$-graded ring which is $G$-controlled,
then there is a one-to-one correspondence between
submonoids of $G$ and unital subrings of $R$ containing $R_e$ given by
\begin{displaymath}
	\{\text{Submonoids of } G\} \ni H \, \stackrel{\phi}{\longmapsto} \, R_H = \oplus_{h\in H} R_h.
\end{displaymath}
In particular, if $R$ is a $G$-crossed product, then this occurs
if $R_e$ is simple and $C_R(R_e)=Z(R_e)$.
\end{prop}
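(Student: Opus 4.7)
The plan is to apply the bijection $\varphi$ of Definition~\ref{DefBM}, which exists by the $G$-controlled hypothesis, and to identify precisely which subsets $H\subseteq G$ yield unital subrings $R_H$. Every unital subring $T$ with $R_e\subseteq T\subseteq R$ is in particular an $R_e$-subbimodule of $R$, so bijectivity of $\varphi$ gives a unique $H\subseteq G$ with $T=R_H$. The task therefore reduces to showing that $T=R_H$ is a unital subring containing $R_e$ if and only if $H$ is a submonoid of $G$, after which the inverse of $\varphi$ restricts to the desired one-to-one correspondence.

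For the easy direction, suppose $H$ is a submonoid. Then $e\in H$ forces $1_R\in R_e\subseteq R_H$, and given $h,k\in H$, the grading yields $R_hR_k\subseteq R_{hk}\subseteq R_H$ since $hk\in H$; hence $R_H$ is closed under multiplication and contains the identity.

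The converse is where the strong gradation hypothesis plays its role. Assume $T=R_H$ is a unital subring containing $R_e$. The inclusion $R_e\subseteq R_H$ immediately gives $e\in H$. For any $h,k\in H$, multiplicative closure of $T$ yields $R_hR_k\subseteq T=R_H$. By strong gradation $R_hR_k=R_{hk}$, and by Proposition~\ref{Prop:GeneralNecessaryConditions}\eqref{PropGenNec:SimpleModule} this component is nonzero. Since $R_{hk}$ sits in the $hk$-homogeneous component of the direct sum decomposition $R=\oplus_{g\in G}R_g$, any nonzero element of $R_{hk}$ that lies in $R_H=\oplus_{h\in H}R_h$ forces $hk\in H$. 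Hence $H$ is closed under multiplication, and the bijection $\phi$ of the proposition is established.

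For the parenthetical claim about $G$-crossed products, recall that every $G$-crossed product is automatically strongly $G$-graded, and by Corollary~\ref{MainResult:CrossedProducts} such a ring is $G$-controlled precisely when $R_e$ is simple and $C_R(R_e)=Z(R_e)$; the main statement then applies. The principal obstacle is the converse direction above, and its resolution hinges on strong gradation: without it, $R_hR_k$ could vanish or fail to fill $R_{hk}$, and $H$ need not be closed under multiplication (compare Example~\ref{GcontrolledNotStrong}, where the multiplication rule is precisely designed to prevent such closure).
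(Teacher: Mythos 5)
Your proof is correct and follows essentially the same route as the paper: use the bijection $\varphi$ from $G$-controlledness to write any intermediate unital subring as $R_H$, then use strong gradation to get $\{0\}\neq R_hR_k=R_{hk}\subseteq R_H$ and conclude $hk\in H$, with the crossed-product addendum handled by Corollary~\ref{MainResult:CrossedProducts}. No gaps.
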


\begin{proof}
If $H$ is a submonoid of $G$, then $R_H = \oplus_{h\in H} R_h$
is a unital subring of $R$, containing $R_e$. Hence $\phi$ is well-defined.
Moreover, it is clear that if $H_1 \neq H_2$ then $R_{H_1} \neq R_{H_2}$,
and this shows that $\phi$ is injective.

Let $S$ be a unital subring of $R$ containing $R_e$. Then $S$ is an $R_e$-bimodule and hence,
by the definition of a $G$-controlled ring,
there is a non-empty
subset $H\subseteq G$ such that $S=R_H$.
Take $g,h\in H$.
Using that $S$ is a ring and that $R$ is strongly $G$-graded,
we have 
$\{0\} \neq R_{gh} = R_g R_h \subseteq S$.
This shows that $gh\in H$ and hence $H$ is a subsemigroup of $G$.
From the fact that 
$R_e \subseteq S$ we get $e\in H$, and hence $H$ is a submonoid of $G$.
This shows that $\phi$ is surjective.
The last part follows from Corollary~\ref{MainResult:CrossedProducts}.
\end{proof}

\begin{corollary}\label{Cor:Subring}
Let $G$ be a finite group.
If $R$ is a strongly $G$-graded ring which is $G$-controlled,
then there is a one-to-one correspondence between
subgroups of $G$ and unital subrings of $R$ containing $R_e$ given by
\begin{displaymath}
	\{\text{Subgroups of } G\} \ni H \, \stackrel{\phi}{\longmapsto} \, R_H = \oplus_{h\in H} R_h.
\end{displaymath}
In particular, if $R$ is a $G$-crossed product, then this occurs
if $R_e$ is simple and $C_R(R_e)=Z(R_e)$.
\end{corollary}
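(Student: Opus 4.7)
The plan is to reduce the statement to Proposition~\ref{Prop:Subring} and then verify the extra ingredient that finiteness of $G$ forces submonoids to coincide with subgroups. Since $R$ is a strongly $G$-graded ring that is $G$-controlled, Proposition~\ref{Prop:Subring} already provides a bijection
\[
	\phi : \{\text{Submonoids of } G\} \longrightarrow \{\text{Unital subrings of } R \text{ containing } R_e\}, \quad H \longmapsto R_H.
\]
So the only thing left to do is to show that, for a finite group $G$, the submonoids of $G$ are precisely its subgroups.

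The ``subgroup $\Rightarrow$ submonoid'' direction is immediate. For the converse, let $H$ be a submonoid of $G$. Since $G$ is finite, $H$ is also finite. Given $h\in H$, the sequence $h, h^2, h^3, \ldots$ lies in $H$ and must contain a repetition; hence there exist integers $m > n \geq 1$ with $h^m = h^n$. Multiplying by $h^{-n}$ in $G$ gives $h^{m-n} = e$, and therefore $h^{-1} = h^{m-n-1}$, which is an element of $H$ (it equals $e$ if $m-n=1$, and a positive power of $h$ otherwise). Thus $H$ is closed under inverses, so $H$ is a subgroup of $G$.

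Combining this observation with Proposition~\ref{Prop:Subring} yields the desired one-to-one correspondence between subgroups of $G$ and unital subrings of $R$ that contain $R_e$. The final sentence of the corollary then follows immediately from Corollary~\ref{MainResult:CrossedProducts}, which guarantees that a $G$-crossed product $R$ with $R_e$ simple and $C_R(R_e)=Z(R_e)$ is $G$-controlled, allowing us to invoke the first part of the corollary. No step presents any real obstacle; the only substantive idea is the standard elementary fact that a finite subsemigroup of a group is automatically a subgroup.
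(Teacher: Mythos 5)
Your proposal is correct and follows exactly the route the paper intends: the corollary is stated as an immediate consequence of Proposition~\ref{Prop:Subring}, with the only extra ingredient being the standard fact that a submonoid of a finite group is a subgroup, which you verify correctly (including the edge case $m-n=1$). Nothing is missing.
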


\begin{remark}
Clearly, subrings of $R_e$ are also subrings of $R$,
but in general they can not be described by the above correspondence.
Take e.g. a skew group ring $A \rtimes_\sigma G$
and consider the subrings $Z(A)$ respectively $A^G = \{a\in A \mid \sigma_g(a)=a, \,\, \forall g\in G \}$.
Notice that $A^G=A$ if and only if $A \star_\sigma G$ is a group ring.
Hence, in Proposition~\ref{Prop:Subring} and Corollary~\ref{Cor:Subring}
the requirement ''subrings of $R$ containing $R_e$'' can not be relaxed.
\end{remark}

\section{Simple strongly $G$-graded rings and some open questions}\label{Sec:Disc}

By Proposition~\ref{GradedSimpleStronglyEquivalence}, $G$-controlled rings which
are strongly $G$-graded are necessarily simple.
In this section we shall discuss some known simplicity results for strongly $G$-graded rings
and see how they are related to our investigation of $G$-controlness. We will also present some open questions (see Section~\ref{subsec:openprob}).

The following result was shown by Van Oystaeyen (see \cite[Theorem 3.4]{FVO1984}).

\begin{theorem}[Van Oystaeyen, 1984]\label{FVOthm}
Let $R$ be a strongly $G$-graded ring
such that the morphism $G \to \Pic(R_e)$, defined by $g\mapsto [R_g]$, is injective.
If $R_e$ is a simple ring, then $R$ is a simple ring.
\end{theorem}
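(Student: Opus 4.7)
The plan is to reduce Theorem~\ref{FVOthm} to the machinery already developed above: the strategy is to verify the hypothesis of condition (iii) of Theorem~\ref{MainResult2}, which will show that $R$ is $G$-controlled, and then invoke Proposition~\ref{GradedSimpleStronglyEquivalence} to pass from $G$-controlledness to simplicity.

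Since injectivity of $\psi$ is given, the substantive step is to show that $R_g$ is a simple $R_e$-bimodule for every $g \in G$. Let $P$ be a nonzero $R_e$-sub-bimodule of $R_g$. The set $P R_{g^{-1}}$ is a two-sided ideal of $R_e$: left $R_e$-stability is immediate from $R_e P \subseteq P$, and right $R_e$-stability from $R_{g^{-1}} R_e \subseteq R_{g^{-1}}$. This ideal cannot be zero, for otherwise by strong gradation
\begin{displaymath}
P = P R_e = P R_{g^{-1}} R_g = \{0\},
\end{displaymath}
contradicting the choice of $P$. Simplicity of $R_e$ therefore forces $P R_{g^{-1}} = R_e$, and hence $P = P R_e = P R_{g^{-1}} R_g = R_e R_g = R_g$, so $R_g$ is a simple $R_e$-bimodule.

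With both hypotheses of Theorem~\ref{MainResult2}(iii) verified, we conclude that $R$ is $G$-controlled. Being also strongly $G$-graded by assumption, Proposition~\ref{GradedSimpleStronglyEquivalence} (the implication (iii)$\Rightarrow$(ii)) then delivers that $R$ is simple. The only step requiring genuine work is the bimodule-simplicity argument; this is a standard Dade-style computation with strong gradation and presents no real obstacle once the equivalences of Theorem~\ref{MainResult2} and Proposition~\ref{GradedSimpleStronglyEquivalence} are at our disposal.
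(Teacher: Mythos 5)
Your proof is correct, but it takes a genuinely different route from the paper, which does not prove Theorem~\ref{FVOthm} at all: the result is simply quoted from Van Oystaeyen \cite[Theorem 3.4]{FVO1984} and then reformulated as Proposition~\ref{Cor:Simplicity} via Lemma~\ref{lemma:CentralizerPicard}. You instead derive it from the paper's own machinery, and the derivation works. The only substantive input you need beyond Theorem~\ref{MainResult2} and Proposition~\ref{GradedSimpleStronglyEquivalence} is the observation that in a strongly $G$-graded ring with $R_e$ simple, each $R_g$ is a simple $R_e$-bimodule; your argument for this is complete and correct ($PR_{g^{-1}}$ is a two-sided ideal of $R_e$, it is nonzero because $PR_{g^{-1}}R_g = PR_e = P \neq \{0\}$, hence it equals $R_e$ by simplicity, and then $P = PR_{g^{-1}}R_g = R_eR_g = R_g$). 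There is no circularity, since none of the results you invoke from Sections 3--4 depend on Theorem~\ref{FVOthm}. What your approach buys is a self-contained proof inside the paper's framework; what it costs is generality --- Van Oystaeyen's original argument does not pass through $G$-controlledness. A pleasant by-product worth noting: your simplicity lemma shows that for strongly $G$-graded rings the hypotheses of Corollary~\ref{Cor:StrongSimplicity} and of Theorem~\ref{FVOthm} are actually equivalent (simplicity of $R_e$ already forces every $R_g$ to be a simple bimodule, and injectivity of $\psi$ is equivalent to $C_R(R_e)=Z(R_e)$ by Lemma~\ref{lemma:CentralizerPicard}), which sharpens the paper's parenthetical claim that Van Oystaeyen's theorem is ``even more general'' than Corollary~\ref{Cor:StrongSimplicity}.
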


Using Lemma \ref{lemma:CentralizerPicard}
we get the following equivalent formulation of Theorem~\ref{FVOthm}.

\begin{prop}\label{Cor:Simplicity}
Let $R$ be a strongly $G$-graded ring
such that $C_R(R_e)=Z(R_e)$ holds.
If $R_e$ is a simple ring, then $R$ is a simple ring.
\end{prop}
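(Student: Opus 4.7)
The proof is intended to be a direct combination of the two results just cited, so my plan is to show that Proposition~\ref{Cor:Simplicity} is simply the reformulation of Theorem~\ref{FVOthm} under the dictionary supplied by Lemma~\ref{lemma:CentralizerPicard}.

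First I would assume the hypotheses: $R$ is a strongly $G$-graded ring with $R_e$ simple and $C_R(R_e)=Z(R_e)$. The key observation is that because $R_e$ is simple, we are in the situation of Lemma~\ref{lemma:CentralizerPicard}\eqref{lemma:CenPicB}, which asserts the equivalence between the centralizer condition $C_R(R_e)=Z(R_e)$ and the injectivity of the group homomorphism $\psi: G \to \Pic(R_e)$, $g\mapsto [R_g]$. Applying this equivalence in the forward direction yields that $\psi$ is injective.

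Next I would invoke Theorem~\ref{FVOthm} of Van Oystaeyen. Its hypotheses are precisely those we now have in hand: $R$ is strongly $G$-graded, $R_e$ is simple, and $\psi$ is injective. The conclusion is that $R$ is simple, which is exactly what we are asked to prove.

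There is no real obstacle to overcome, since both ingredients have been established earlier in the excerpt; the content of the proposition is the observation that Lemma~\ref{lemma:CentralizerPicard}\eqref{lemma:CenPicB} allows the Picard-group hypothesis in Van Oystaeyen's theorem to be replaced by the more elementary centralizer condition when $R_e$ is simple. If I were to give an alternative argument that avoided citing Theorem~\ref{FVOthm} directly, the only subtle point would be to verify that every non-zero ideal of $R$ meets $R_e$ non-trivially; this is where one would need the injectivity of $\psi$ (equivalently, the centralizer condition) to rule out homogeneous elements of non-identity degree commuting with $R_e$, but since Theorem~\ref{FVOthm} is already at our disposal, this detour is unnecessary.
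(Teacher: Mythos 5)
Your proposal is correct and matches the paper's own (implicit) argument exactly: the paper introduces Proposition~\ref{Cor:Simplicity} precisely as the reformulation of Theorem~\ref{FVOthm} obtained via Lemma~\ref{lemma:CentralizerPicard}. The only cosmetic remark is that the forward implication you need (centralizer condition implies injectivity of $\psi$) is already Lemma~\ref{lemma:CentralizerPicard}\eqref{lemma:CenPicA} and does not require the simplicity of $R_e$; citing part \eqref{lemma:CenPicB} is harmless but slightly more than necessary.
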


It now becomes clear that Van Oystaeyen's result is in fact
a generalization of Azumaya's
result \cite[Theorem 4(1)]{Azumaya1946},
from skew group rings by finite groups to
general
strongly group graded rings. (In fact, it is even more general than Corollary~\ref{Cor:StrongSimplicity}.)
The following example shows that Proposition~\ref{Cor:Simplicity} does not necessarily hold
if we relax the assumption on the strong gradation.

\begin{exmp}
If $R$ is not strongly $G$-graded, then simplicity of $R_e$ and $C_R(R_e)=Z(R_e)$
are not enough to guarantee that $R$ be simple.
Indeed,
let $F$ be a field and let $\sigma : F \to F$ be a field automorphism
of infinite order.
We define a (not strongly) $\Z$-graded ring
$R=\oplus_{n \in \Z}R_n$,
with $R_n = F u_n$ for $n\geq 0$
and $R_n = \{0\}$ for $n<0$,
whose multiplication is defined by
$a u_n b u_m = a \sigma^n(b) u_{n+m}$
for $a,b\in F$ and $n,m\in \Z$.
Clearly, $F=R_e$ is simple and $C_R(R_e)=C_R(F)=F=Z(R_e)$.
Moreover, the ideal generated by $u_1$ is proper, hence $R$ is not simple.
Also notice that $R$ is not $\Z$-controlled.
\end{exmp}

\begin{remark}
Let $R$ be a $G$-graded ring.

(a) Suppose that the gradation on $R$ is left (or right) non-degenerate.
If $R_e$ is a simple ring, then $R$ is graded simple.

(b) $C_R(R_e)=Z(R_e)$ is not a necessary condition for simplicity of $R$.
To see this, consider e.g. the skew group ring $M_2(\R) \rtimes_\sigma \Z/2\Z$ in \cite[Example 4.1]{O2014}.
\end{remark}

Recall that a group $G$ is said to be \emph{hypercentral} if every non-trivial factor group of $G$ has a non-trivial center.
Hypercentral groups include e.g. all abelian groups.
The following result follows from \cite[Theorem 6]{Jespers1993} and
is a partial generalization of Van Oystaeyen's theorem (Theorem~\ref{FVOthm}).

\begin{prop}\label{thm:hypercentral}
Let $G$ be a hypercentral group and let $R$ be a strongly $G$-graded ring.
If $R$ is graded simple and $C_R(R_e)=Z(R_e)$ holds, then $R$ is a simple ring.
\end{prop}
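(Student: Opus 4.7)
The strategy is to show that every non-zero ideal $I$ of $R$ contains a non-zero \emph{homogeneous} element; combined with graded simplicity this will immediately force $I = R$ and hence $R$ to be simple.

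Indeed, if $x \in I \cap R_g$ is non-zero for some $g \in G$, then $RxR$ is a non-zero two-sided ideal of $R$ contained in $I$. Strong grading ensures $R_a x R_b \subseteq R_{agb}$ for all $a,b \in G$, so $RxR$ is in fact graded, and graded simplicity then yields $R = RxR \subseteq I$, whence $I = R$.

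To produce such a homogeneous element, let $I$ be a non-zero ideal of $R$ and pick $x \in I \setminus \{0\}$ with $|\Supp(x)|$ minimal. Using $R_{g^{-1}} R_g = R_e$ for some (any) $g \in \Supp(x)$, left-multiply $x$ by a suitable element of $R_{g^{-1}}$ to translate the support and arrange $e \in \Supp(x)$ without increasing $|\Supp(x)|$. Suppose, for contradiction, that $|\Supp(x)| \geq 2$. For each $r \in R_e$, the element $rx - xr$ lies in $I$ and has support contained in $\Supp(x)$, so minimality forces either $rx = xr$ or $\Supp(rx - xr) = \Supp(x)$. If $rx = xr$ holds for \emph{every} $r \in R_e$, then each homogeneous component $x_g$ lies in $C_R(R_e) = Z(R_e) \subseteq R_e$, which forces $\Supp(x) \subseteq \{e\}$, contradicting the assumption $|\Supp(x)| \geq 2$.

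Thus some $r \in R_e$ satisfies $rx - xr \neq 0$ with $\Supp(rx - xr) = \Supp(x)$, and to shrink $\Supp(x)$ further one must commute $x$ with homogeneous elements of degree outside $R_e$. This is where the hypercentrality of $G$ is crucial: because $G$ is hypercentral, $G$ admits an ascending central series $\{e\} = Z_0 \trianglelefteq Z_1 \trianglelefteq \cdots \trianglelefteq G$ with $Z_{\alpha+1}/Z_\alpha = Z(G/Z_\alpha)$. A transfinite induction along this series, at each stage commuting $x$ with suitable elements of $R_z$ for $z$ in the next central step (using that such $z$ permutes the support controllably, since $R_z R_g = R_{zg} = R_{gz} = R_g R_z$ for $z \in Z(G)$, and analogously modulo each $Z_\alpha$), eventually exhausts $G$ and forces $|\Supp(x)| = 1$, yielding the desired contradiction.

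The main obstacle is rigorously running the transfinite induction of the last step: one must verify that at each level of the hypercentral series the commutator-and-minimality argument continues to work and genuinely shrinks the image of $\Supp(x)$ in the appropriate quotient $G/Z_\alpha$, while tracking minimality simultaneously at every level. This technical heart of the argument is precisely the content of \cite[Theorem 6]{Jespers1993}, on which the proposition as stated relies.
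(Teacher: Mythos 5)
There is a genuine gap, and it sits exactly where the paper's proof does its work. You close by asserting that the ``technical heart'' of your argument is ``precisely the content of \cite[Theorem 6]{Jespers1993}'', but that theorem is a simplicity criterion whose hypotheses include that $Z(R)$ is a \emph{field}; it is not a minimal-support lemma into which your unfinished induction can be absorbed. To invoke it you must first verify that $Z(R)$ is a field, and nothing in your proposal does so. That verification is the entire content of the paper's proof: for a non-zero $c\in Z(R)$ one has $c\in C_R(R_e)=Z(R_e)\subseteq R_e$, so $cR$ is a non-zero \emph{graded} ideal of $R$; graded simplicity forces $cR=R$, comparing degree-$e$ components gives $cR_e=R_e\ni 1_R$, hence $c$ is invertible in $R_e$, and one checks $c^{-1}\in Z(R)$. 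Without this step your appeal to Jespers' theorem is unjustified, and the proposal does not prove the proposition.

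The direct route you sketch instead (pick $x\in I$ with $|\Supp(x)|$ minimal, normalize $e\in\Supp(x)$, shrink the support by commutators) is essentially an attempt to reprove Jespers' theorem, but the part you leave to ``transfinite induction'' is exactly the part that is hard, and the one concrete mechanism you describe does not behave as claimed: for $z\in Z(G)$ and $a\in R_z$ the element $ax-xa$ has support contained in $z\Supp(x)$, not in $\Supp(x)$, so the minimality argument that worked for $r\in R_e$ does not transfer verbatim to higher levels of the central series. What such arguments actually produce (after the $R_e$-commutator step, which you do carry out correctly) is a non-zero element of $Z(R)\cap I$, and to get from $Z(R)\cap I\neq\{0\}$ to $I=R$ one again needs every non-zero element of $Z(R)$ to be invertible --- i.e.\ the field property you never established. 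The repair is short: prove that $Z(R)$ is a field as above and then cite \cite[Theorem 6]{Jespers1993} as a black box, which is the paper's route; the current text does neither that nor a complete direct induction.
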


\begin{proof}
Suppose that $R$ is graded simple and that $C_R(R_e)=Z(R_e)$ holds.
If we can show that $Z(R)$ is a field,
then by \cite[Theorem 6]{Jespers1993} we are done.

Take a non-zero $c\in Z(R) \subseteq C_R(R_e)=Z(R_e) \subseteq R_e$.
Clearly, $cR$ is a non-zero ideal of $R$.
Hence, by graded simplicity of $R$, we get $cR=R$.
From the gradation we conclude that $cR_e = R_e$.
In particular, $c$ is invertible in $R_e$.
One easily verifies that the inverse of $c$ belongs to $Z(R)$.
This shows that $Z(R)$ is a field.
\end{proof}

\begin{exmp}\label{Example:Matrix}
Consider the matrix ring $R=M_3(\C)$ equipped with the following gradation by $G=\Z/2\Z$.
\begin{displaymath}
	R_0=\left(
	\begin{array}{ccc}
	\C & 0 & \C \\
	0 & \C & 0 \\
	\C & 0 & \C
	\end{array}
	\right)
	\quad
		R_1=\left(
	\begin{array}{ccc}
	0 & \C & 0 \\
	\C & 0 & \C \\
	0 & \C & 0
	\end{array}
	\right)
\end{displaymath}
A short calculation shows that
\begin{displaymath}
	C_R(R_0)=
	\left\{
	\left(
	\begin{array}{ccc}
	a & 0 & 0 \\
	0 & b & 0 \\
	0 & 0 & a
	\end{array}
	\right) \Big\lvert \,\, a,b\in \C \right\}
	=Z(R_0).
\end{displaymath}
Another short calculation shows that $R_0$ has two non-trivial ideals;
\begin{displaymath}
	I=\left(
	\begin{array}{ccc}
	0 & 0 & 0 \\
	0 & \C & 0 \\
	0 & 0 & 0
	\end{array}
	\right)
	\quad \text{ and } \quad
		J=\left(
	\begin{array}{ccc}
	\C & 0 & \C \\
	0 & 0 & 0 \\
	\C & 0 & \C
	\end{array}
	\right).
\end{displaymath}
We notice that $R_1 I R_1 \subseteq J$ and $R_1 J R_1 \subseteq I$. Thus, $R$ is graded simple.
Using Proposition~\ref{thm:hypercentral} we retrieve a well-known fact: the matrix ring $R=M_3(\C)$ is simple.
Notice, however, that $R=M_3(\C)$ is not $\Z/2\Z$-controlled.
\end{exmp}

\subsection{Open questions}\label{subsec:openprob}

We shall now present some open questions which require further investigation.

\begin{remark}
Let $R$ be a strongly $G$-graded ring.
If $R_e$ is a division ring, then it follows almost immediately from the definition of a
strongly $G$-graded ring that $R$ is a $G$-crossed product.
We notice that the assumption on $R_e$ can be slightly relaxed.
In fact, if $R_e$ is a simple and artinian ring, then $R$ is a $G$-crossed product (see e.g. \cite[Lemma 1.1]{S1988}).
\end{remark}

It is easy to find examples of $G$-crossed products on which Theorem~\ref{FVOthm}
can be applied. Unfortunately, the literature does not seem to provide any example of a general strongly
$G$-graded ring (not a $G$-crossed product) satisfying the
conditions
of Theorem~\ref{FVOthm}.
Based on this, and in light of the above remark, we ask the following question.

\begin{question}\label{Q1}
Let $R$ be a strongly $G$-graded ring for which $R_e$ is a simple (and non-artinian) ring.
Is $R$ necessarily a $G$-crossed product?
\end{question}

If the answer to Question~\ref{Q1} is negative, then a natural follow-up question reads as follows.

\begin{question}\label{Q2}
Let $R$ be a strongly $G$-graded ring which is $G$-controlled.
Is $R$ necessarily a $G$-crossed product?
\end{question}

We want to know whether Van Oystaeyen's result (Theorem~\ref{FVOthm}) can be generalized
to situations when $R_e$ is not necessarily simple and ask the following.

\begin{question}\label{Q3}
Let $R$ be a strongly $G$-graded ring.
Suppose that $R$ is graded simple and that $C_R(R_e)=Z(R_e)$ holds.
Is $R$ necessarily simple?
\end{question}

\begin{remark}
Notice that Question~\ref{Q3} is known to have an affirmative answer in the following three cases:
\begin{enumerate}
	\item $R_e$ is simple (see Proposition~\ref{Cor:Simplicity});
	\item $R_e$ is commutative (see \cite[Theorem 6.6]{O2009});
	\item $G$ is a hypercentral group (see Proposition~\ref{thm:hypercentral}).
\end{enumerate}
\end{remark}

\end{document}